\numberwithin{equation}{section}
\newtheorem{theorem}{Theorem}[section]
\newtheorem{lemma}[theorem]{Lemma}
\newtheorem{proposition}[theorem]{Proposition}
\newtheorem{corollary}[theorem]{Corollary}
\theoremstyle{definition}
\newtheorem{example}[theorem]{Example}
\newcommand{\K}[0]{\mathbb{K}}
\DeclareMathOperator{\supp}{supp}
\DeclareMathOperator{\reg}{reg}
\DeclareMathOperator{\tor}{Tor}
\title{ The graded Betti numbers of truncation of ideals in polynomial rings}
\date{
}
\author{Chwas Ahmed, Ralf Fr{\"o}berg and Mohammed Rafiq Namiq}
\address{Chwas Ahmed, Department of Mathematics, College of Science, University of Sulaimani, Kurdistan Region, Iraq.}
\email{chwas.ahmed@univsul.edu.iq}
\address{Ralf Fr{\"o}berg, Department of mathematics, Stockholm university, Sweden}
\email{frobergralf@gmail.com}
\address{Mohammed Rafiq Namiq, Department of Mathematics, College of Science, University of Sulaimani, Kurdistan Region, Iraq.}
\email{mohammed.namiq@univsul.edu.iq}
\subjclass[2020]{Primary 13C40,	13D02; Secondary 13A02, 13C70.}
\keywords{Truncation, Betti numbers, index, linear resolution, polarization, Hilbert series}
\begin{document}
	
\begingroup
\def\uppercasenonmath#1{} 
\let\MakeUppercase\relax 
\maketitle
\endgroup

\begin{abstract}
	Let $R=\K[x_1,\dots,x_n]$, a graded algebra $S=R/I$ satisfies $N_{k,p}$ if $I$ is generated in degree $k$, and the graded minimal resolution is linear the first $p$ steps, and the $k$-index of $S$ is the largest $p$ such that $S$ satisfies $N_{k,p}$. Eisenbud and Goto have shown that for any graded ring $R/I$, then $R/I_{\geq k}$, where $I_{\geq k}=I\cap M^k$ and $M=(x_1,\dots,x_n)$, has a $k$-linear resolution (satisfies $N_{k,p}$ for all $p$) if $k\gg0$. 
	For a squarefree monomial ideal $I$, we are here interested in the ideal $I_k$ which is the squarefree part of $I_{\geq k}$. The ideal $I$ is, via 
	Stanley-Reisner correspondence, associated to a simplicial complex $\Delta_I$.
	In this case, all Betti numbers of $R/I_k$ for $k>\min\{\deg(u)\mid u\in I\}$, which of course is a much finer invariant than the index, can be determined from the Betti diagram of $R/I$ and the $f$-vector of $\Delta_I$. We compare our results with the corresponding statements for $I_{\ge k}$. (Here
	$I$ is an arbitrary graded ideal.) In this case we show that the Betti numbers of $R/I_{\ge k}$ can be determined from the Betti numbers of $R/I$
	and the Hilbert series of $R/I_{\ge k}$.
\end{abstract}


\section{Introduction}

Let $R=\K[x_1,\dots,x_n]$  and $I$ be an ideal of $R$. Denote the unique graded maximal ideal of $R$ by $M$. 
Green-Lazarsfeld introduced the condition $N_p$ which means that the resolution of $R/I$ is 2-linear the $p$ first times. Thus $N_1$ means that the ideal is generated in degree 2, $N_2$ means that furthermore all syzygies between the generators are linear. The largest $p$ for which the graded algebra $R/I$ satisfies $N_p$ is called the \emph{index} of $R/I$ (see \cite{GreenLazarsfeld1984,GreenLazarsfeld1985}).  Later the condition $N_{k,p}$ was introduced. This means that the resolution is $k$-linear the $p$ first times. Thus $R/I$ satisfies $N_{k,1}$ if $I$ is generated in degree $k$, it satisfies $N_{k,2}$ if furthermore the syzygies of the generators of $I$ are linear and so on. We call the largest $p$ for which $R/I$ satisfies $N_{k,p}$ the $k$-\emph{index} of $R/I$. That the $k$-index of $R$ is $\infty$ means that $R/I$ has a $k$-linear resolution. A quotient ring $S=R/I$ has a linear resolution if all syzygies are linear. That is, the resolution of $S$ is $k$-linear if $\tor^R_{i,j}(S,\K)=0$ if $j\neq k+i-1$ if $i>0$. 
For a positive integer $k$, Eisenbud and Goto in \cite{EisenbudGoto1984} define $I_{\geq k}=I\cap M^k$. They show that for any graded ring $R/I$, then $R/I_{\geq k}$ has a $k$-linear resolution for $k\geq\reg(I)$ (see {\cite[Theorem 1.2]{EisenbudGoto1984}). Moreover, a formula for the $k$-index is given by Eisenbud, Huneke, and Ulrich in {\cite[Proposition 1.6]{EisenbudHunekeUlrich2006}}. 
For a monomial ideal $I$, let $I_k$ be the squarefree part of $I_{\geq k}$. That is $I_k$ is the squarefree truncation of $I$ past $k$. If $I\subseteq R$ is a squarefree monomial ideal and $M_k=\left(u\mid u\text{ is a squarefree monomial in }M^k\right)$, then $I_k=I\cap M_k$.
In Theorem \ref{betti I_k}, we compute the graded Betti numbers of $R/I_k$ for $k>\min\{\deg(u)\mid u\in I\}$. The Betti numbers provide a deeper insight about the quotient ring than $k$-index invariant. For a monomial ideal (not squarefree) $I$, in Corollary \ref{I_k, I_geq k} we show that the truncation of $I$ past $k$ has a linear resolution if and only if the squarefree truncation of the polarization of $I$ past $k$, $\mathcal{P}(I)_{k}$, has a linear resolution.
Finally, for any graded ideal $I$, in Theorem \ref{betti I_geq k} we compute the graded Betti numbers of $R/I_{\geq k}$ for $k>\min\{\deg(u)\mid u\in I\}$.

\section{The graded Betti numbers of $I_k$}
In this section, let $I$ be a squarefree monomial ideal of $R$.
Define $J_k=\{ u\in M_k\mid u\notin I_k\}$. Let $e_u=\supp(u)$ where $\supp(u)=\{x_i\mid u=x_1^{a_1}\dots,x_n^{a_n}, a_i\neq 0\}$ and $G(I)$ be the set of minimal generators of $I$.
The Stanley-Reisner complex of $I$ is the simplicial complex consisting of faces which correspond to the squarefree monomials not in $I$, that is
$$\Delta_{I}=\{e_u\subset \{x_1,\dots,x_n\}\mid u\text{ is a squarefree monomial not in } I\}.$$
An element $F$ of $\Delta_{I}$ is a face of $\Delta_{I}$ of dimension $|F|-1$. A maximal face (with respect to inclusion) of $\Delta_{I}$ is a facet. Denote the set of the facets of $\Delta_{I}$ by $\mathcal{F}(\Delta_{I})$. The dimension of $\Delta_{I}$ is $\dim\Delta_{I}=\max\{|F|\mid F\in\mathcal{F}(\Delta_{I})\}-1$. The pure $k$-skeleton of $\Delta_{I}$ is a simplicial complex $\Delta_{I}^{[k]}$  whose facets are in $\mathcal{F}(\Delta_{I})$ with $|F|=k+1$.

Note that $I_{k+1}=\left(ux_i\mid\deg(u)=k,u\in I_k, x_i\notin e_u\right)+\left(v\mid \deg(v)>k,v\in I_k\right).$

\begin{lemma}\label{J_{k+1}}
	Let $I$ be a squarefree monomial  ideal of $R$.
	Then for $k\geq\min\{\deg(u)\mid u\in I\}$ we have
	$u\in J_{k+1}$ if and only if $e_u\in\Delta_{I_k}^{[k]}$.
\end{lemma}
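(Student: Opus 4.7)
The plan is to reduce both sides of the equivalence to the single condition that $u$ is a squarefree monomial of degree $k+1$ with $u\notin I$.

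For $u\in J_{k+1}$: by definition $u$ is a squarefree monomial generator of $M_{k+1}$ (so $\deg u=k+1$) with $u\notin I_{k+1}$. The content step is that for such a $u$ one has $u\in I_{k+1}\iff u\in I$. One direction uses $I_{k+1}\subseteq I$; the other uses that a squarefree $u\in I$ of degree $k+1$ is itself a squarefree monomial in $I\cap M^{k+1}$, hence lies in the squarefree truncation $I_{k+1}$. Alternatively, one can extract this from the decomposition of $I_{k+1}$ recorded in the Note preceding the lemma: such a $u$ is picked up by the second summand whenever $u\in I_k$ in degree $>k$, and by the first summand $(wx_i)$ whenever some $k$-subset of $\supp(u)$ already supports a degree-$k$ squarefree element of $I_k$.

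For $e_u\in\Delta_{I_k}^{[k]}$: the Stanley--Reisner correspondence gives $e_u\in\Delta_{I_k}\iff u\notin I_k$, which for squarefree $u$ of degree $k+1$ (in particular of degree $\geq k$) says $u\notin I$. Reading the pure $k$-skeleton as the subcomplex of $\Delta_{I_k}$ generated by its size-$(k+1)$ faces, and observing that $|e_u|=k+1$ already sits at that top size, we get $e_u\in\Delta_{I_k}^{[k]}\iff e_u\in\Delta_{I_k}$. Chaining the two reductions finishes the proof.

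The step requiring the most care is the identification $u\in I_{k+1}\iff u\in I$ for squarefree $u$ of degree $k+1$: it is the bridge between the ideal-theoretic $J_{k+1}$ and the combinatorial $\Delta_{I_k}^{[k]}$, and the hypothesis $k\geq\min\{\deg v:v\in I\}$ is tacitly used to ensure that $I_k$ genuinely has degree-$k$ generators, so the decomposition of $I_{k+1}$ in the Note is non-vacuous.
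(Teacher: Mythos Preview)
Your argument is correct. Reducing both sides to the single condition ``$u$ is squarefree of degree $k+1$ with $u\notin I$'' is clean, and your justifications for the two reductions are valid: for a squarefree monomial $u$ of degree $k+1$ one has $u\in I_{k+1}\iff u\in I\iff u\in I_k$, and for $|e_u|=k+1$ the face $e_u$ lies in the pure $k$-skeleton of $\Delta_{I_k}$ exactly when it lies in $\Delta_{I_k}$.

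This differs mildly from the paper's route. In the $(\Rightarrow)$ direction the paper does essentially the same thing by contradiction (if $e_u\notin\Delta_{I_k}^{[k]}$ then $u\in I_k$, hence $u\in I_{k+1}$). In the $(\Leftarrow)$ direction, however, the paper argues via divisors and the Note: assuming $u\in I_{k+1}$, it produces a variable $x_t$ with $u/x_t\in I_k$ and derives a contradiction from $e_{u/x_t}\in\Delta_{I_k}$. Your $(\Leftarrow)$ is more direct --- you just use $I_{k+1}\subseteq I$ together with the observation that a squarefree element of $I$ in degree $\geq k$ already lies in $I_k$ --- and this bypasses a small case the paper glosses over (when $u$ is itself a minimal generator of $I$ in degree $k+1$, no proper divisor $u/x_t$ lies in $I_k$, but $u\in I_k$ gives the contradiction immediately).

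One minor correction: your closing remark that the hypothesis $k\geq\min\{\deg v:v\in I\}$ is ``tacitly used'' is not accurate for your own argument. Your direct chain $u\in I_{k+1}\iff u\in I\iff u\in I_k$ nowhere needs it, and even the Note's decomposition of $I_{k+1}$ remains valid (with a possibly empty first summand) without it. The hypothesis appears in the lemma only to align with the standing assumptions of the section, not because the equivalence would fail otherwise.
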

\begin{proof}
	($\Rightarrow$) Let $u\in J_{k+1}$. Assume that $e_{u}\notin\Delta_{I_k}^{[k]}$. Then $u\in I_k$ and $u$ is a squarefree monomial of degree $k+1$. Thus $u\in I_{k+1}$ which is a contradiction.
	
	($\Leftarrow$) We have $u\in G(M_{k+1})$ since $e_u\in \Delta^{[k]}_{I_k}$. We claim $u\notin I_{k+1}$. Suppose to the contrary that $u\in I_{k+1}$. Then there exists $x_t\in e_u$ such that $\hat{u}_t\in I_k$. Hence $\hat{u}_t\notin J_k$ for some $x_t\in e_u$, a contradiction. The claim follows.
\end{proof}

In the following Proposition we find the  facets, i.e. maximal faces, of $\Delta_{I}$, then later use these facets to compute the Betti numbers of $I_k$.

\begin{proposition}\label{simplicial D_{I_k+1}}
	Let $I$ be a squarefree monomial ideal of $R$. Then for $k\geq\min\{\deg(u)\mid u\in I\}$ we have
	$$\mathcal{F}(\Delta_{I_{k+1}})=\{e_u\mid u\in G(I_k),\deg(u)=k\}\cup\{e_v\mid e_v\in\mathcal{F}(\Delta_{I_k}),\deg(v)\geq k\}.$$ 
\end{proposition}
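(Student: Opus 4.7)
The strategy is to characterize the faces of $\Delta_{I_{k+1}}$ directly from the definition of the squarefree truncation and then extract the facets by a case analysis on $|F|$.

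First I would characterize when a subset $F\subseteq\{x_1,\dots,x_n\}$ lies in $\Delta_{I_{k+1}}$. Because $I_{k+1}\subseteq M^{k+1}$, any squarefree monomial $u_F=\prod_{x\in F}x$ of degree at most $k$ automatically avoids $I_{k+1}$; and when $\deg u_F\geq k+1$ we have $u_F\in M_{k+1}$, so $u_F\in I_{k+1}$ iff $u_F\in I$. Hence $F\in\Delta_{I_{k+1}}$ iff $|F|\leq k$ or $u_F\notin I$. The same reasoning applied to $\Delta_{I_k}$ gives $F\in\Delta_{I_k}$ iff $|F|\leq k-1$ or $u_F\notin I$, so the two complexes differ only at size $k$: every size-$k$ subset belongs to $\Delta_{I_{k+1}}$, but only those with $u_F\notin I$ belong to $\Delta_{I_k}$.

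Next, any facet of $\Delta_{I_{k+1}}$ has at least $k$ elements, since otherwise $F\cup\{x_i\}$ still has size at most $k$ and is a face. For the inclusion $\mathcal{F}(\Delta_{I_{k+1}})\subseteq T_1\cup T_2$ (where $T_1$, $T_2$ denote the two families on the right-hand side) I would split on $|F|$. If $|F|=k$ and $u_F\in I$, then $u_F$ is a squarefree degree-$k$ element of $I$ and is minimal in $I_k$ because every proper divisor has degree $<k$ and so is excluded from $M_k$; therefore $u_F\in G(I_k)$ and $F\in T_1$. If $|F|=k$ and $u_F\notin I$, then $F$ is a face of $\Delta_{I_k}$, and the maximality condition $u_Fx_i\in I$ for every $x_i\notin F$ (forced by facethood in $\Delta_{I_{k+1}}$) also forbids extension inside $\Delta_{I_k}$, placing $F\in T_2$. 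If $|F|\geq k+1$, the face relations of $\Delta_{I_{k+1}}$ and $\Delta_{I_k}$ coincide, and so do the facet relations, so again $F\in T_2$.

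For the reverse inclusion I would check both families directly. Any $u\in G(I_k)$ of degree $k$ yields $e_u$ of size $k$, which is a face of $\Delta_{I_{k+1}}$; moreover $u\in I$ forces $ux_i\in I\cap M_{k+1}=I_{k+1}$ for every $x_i\notin e_u$, so $e_u$ is maximal. Any facet $e_v$ of $\Delta_{I_k}$ with $\deg v\geq k$ satisfies $v\notin I$ (this is exactly what being a face of $\Delta_{I_k}$ says in these sizes), so $e_v\in\Delta_{I_{k+1}}$, while the maximality condition $u_{e_v}x_i\in I$ inherited from $\Delta_{I_k}$ prevents extension in $\Delta_{I_{k+1}}$. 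The only point requiring genuine care is the bookkeeping at size exactly $k$, where the dichotomy whether $u_F\in I$ decides which family contains the facet; once this is isolated the argument reduces to a transparent comparison of the two Stanley-Reisner complexes.
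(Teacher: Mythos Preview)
Your proof is correct and follows the same overall architecture as the paper's: a case analysis on $|F|$ for the forward inclusion, followed by a direct verification of each family for the reverse inclusion.

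The one noteworthy difference is the technical lever. The paper routes several steps through Lemma~\ref{J_{k+1}} (the description of $J_{k+1}$ via $\Delta_{I_k}^{[k]}$), whereas you bypass that lemma entirely by first writing down the clean characterization
\[
F\in\Delta_{I_{k+1}}\iff |F|\le k\ \text{or}\ u_F\notin I,
\qquad
F\in\Delta_{I_k}\iff |F|\le k-1\ \text{or}\ u_F\notin I,
\]
which reduces everything to comparing membership in the original ideal $I$. This makes the size-$k$ case (the only place the two complexes differ) transparent: either $u_F\in I$, forcing $u_F\in G(I_k)$, or $u_F\notin I$, forcing $F$ to be a facet of $\Delta_{I_k}$. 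The paper's argument reaches the same dichotomy but via the auxiliary sets $J_k$, $J_{k+1}$. Your route is slightly more elementary and self-contained; the paper's route has the advantage of reusing a lemma that is stated separately.
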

\begin{proof}
	Let $P_k:=\{e_u\mid u\in G(I_k),\deg(u)=k\}$ and $Q_k:=\{e_v\mid e_v\in\mathcal{F}(\Delta_{I_k}),\deg(v)\geq k\}$.
	Let $F\in\mathcal{F}(\Delta_{I_{k+1}})$. Then $F=e_u$ for some $u\notin I_{k+1}$. We need to consider the following three cases.
	\begin{itemize}
		\item The case when $|F|>k$. If $e_u$ is not a face of $\Delta_{I_k}$, then $u\in I_k$. But $\deg(u)\geq k+1$ so $u\in I_{k+1}$, and this contradicts $e_u\in\Delta_{I_{k+1}}$. 
		It remains to show that $F$ is maximal in $\Delta_{I_{k}}$. Suppose $e_{ux}\in\Delta_{I_{k}}$ for some $x\notin e_u$. Note that $ux\not\in I_{k+1}$, otherwise having   $\deg(ux)>k+1$ implies $ux\in I_{k}$.  Hence $e_{ux}\in\Delta_{I_{k+1}}$, a contradiction. Therefore $F\in Q_k$.

		\item The case when $|F|=k$. If $u\notin I_k$, then $u\in J_k$. To obtain a contradiction assume that $e_{ux}\in\Delta_{I_k}$ for some $x\notin e_u$. Then $\deg({ux})=k+1$,  and Lemma~\ref{J_{k+1}} implies $ux\in J_{k+1}$. Hence  $e_{ux}\in\Delta_{I_{k+1}}$, again this contradicts the assumption $e_u\in\mathcal{F}(\Delta_{I_{k+1}})$. Therefore $e_u\in P_k$ or $e_u\in Q_k$.
		
		\item The case when $|F|<k$. We have $e_{ux}\notin\Delta_{I_{k+1}}$. Hence $ux\in I_{k+1}$ with $\deg({ux})\leq k$ which is a contradiction.
	\end{itemize}
	
	Conversely, let $e_u\in P_k\cup Q_k$, we show that $e_u\in\mathcal{F}(\Delta_{I_{k+1}})$ by considering the following three cases:
	\begin{itemize}
		\item
	 When $|e_u|=k$. If $e_u\in P_k$, then $ux\in I_{k+1}$ for all $x\notin e_u$. It follows that $e_u\in\mathcal{F}(\Delta_{I_{k+1}})$.
	 Now if $e_u\in Q_k$, then $u\not\in I_k$. Suppose that $ux\not\in I_{k+1}$ for some $x\notin e_u$. Then $ux\notin I_k$ otherwise $ux\in I_{k+1}$. Hence $e_{ux}\in\Delta_{I_k}$ which is a contradiction. Therefore $ux\in I_{k+1}$ for all $x\notin e_u$. Hence $e_u\in\mathcal{F}(\Delta_{I_{k+1}})$. 	
	\item If $|e_u|=k+1$, then $e_{u}\in\Delta_{I_k}$. By Lemma \ref{J_{k+1}} we have $u\in J_{k+1}$, hence $e_u\in\Delta_{I_{k+1}}$. If $e_u$ is not a facet of $\Delta_{I_{k+1}}$, then there an $x\notin e_u$ such that $e_{ux}\in\Delta_{I_{k+1}}$. Then $ux\notin I_{k+1}$, and so $ux\notin I_{k}$. Hence $e_{ux}\in\Delta_{I_{k}}$ some $x\notin e_u$ which is  a contradiction.
\item	When $|e_u|\geq k+2$. If $u\in I_{k+1}$ then $u\in I_k$, and so $e_u\notin\Delta_{I_{k}}$, which is also a contradiction. The face $e_u$ is maximal in $\Delta_{I_{k+1}}$ by the same argument used in the case $|e_u|=k+1$. 
	\end{itemize}
	
\end{proof}

For a fixed $t$, we shall write $(f_{-1}^t,f_{0}^t,\dots, f_{d-1}^t)$ for the $f$-vector of $\Delta_{I_t}$.

\begin{corollary}\label{f-vector I_k}
	Let $I$ be a squarefree monomial ideal of $R$ and $f(\Delta_{I})=(f_{-1},f_0,f_1,\dots,f_{d-1})$. Then for $k\geq\min\{|e_u|\mid u\in I\}$ we have $$f(\Delta_{I_{k+1}})=\begin{cases}
		\left(\binom{n}{0},\binom{n}{1},\binom{n}{2},\dots,\binom{n}{k},f_{k},\dots,f_{d-1}\right)\quad&\text{ if }k\leq d,\\ \left(\binom{n}{0},\binom{n}{1},\binom{n}{2},\dots,\binom{n}{k}\right)\quad&\text{ if }d<k<n.
	\end{cases}$$
\end{corollary}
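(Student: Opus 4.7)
The plan is to bypass Proposition~\ref{simplicial D_{I_k+1}} and argue directly from the definition of $I_{k+1}$, since the $f$-vector is a dimension-by-dimension count and the ideal $I_{k+1}$ has a very clean description: it consists exactly of the squarefree monomials of $I$ of degree at least $k+1$. The natural split is by face size: everything of size at most $k$ should be a face automatically, and everything of size at least $k+1$ should inherit its status from $\Delta_I$. Once these two observations are in place the corollary reduces to bookkeeping.

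First I would handle the low-dimensional part. If $u$ is a squarefree monomial with $\deg(u)\le k$, then $u\notin M^{k+1}$, hence $u\notin I_{\geq k+1}$, and so $u\notin I_{k+1}$. Therefore every subset of $\{x_1,\dots,x_n\}$ of cardinality at most $k$ is a face of $\Delta_{I_{k+1}}$, giving
$$f_i(\Delta_{I_{k+1}})=\binom{n}{i+1}\qquad\text{for }-1\le i\le k-1.$$

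Next I would do the high-dimensional part. If $u$ is squarefree with $\deg(u)\ge k+1$, then $u\in M^{k+1}$, so $u\in I_{k+1}$ if and only if $u\in I$. Consequently $e_u\in\Delta_{I_{k+1}}$ if and only if $e_u\in\Delta_I$, which gives
$$f_i(\Delta_{I_{k+1}})=f_i(\Delta_I)=f_i\qquad\text{for }i\ge k.$$

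Finally I would assemble the two pieces and split into the two cases of the statement. When $k\le d$ the tail contributes the entries $f_k,f_{k+1},\dots,f_{d-1}$ of $\Delta_I$, yielding the first formula. When $d<k<n$ the dimension of $\Delta_I$ is $d-1<k$, so $f_i=0$ for all $i\ge k$ and the $f$-vector terminates at $f_{k-1}=\binom{n}{k}$, giving the second formula. I do not anticipate a genuine obstacle here; the only point to double-check is that the hypothesis $k\ge \min\{|e_u|\mid u\in I\}$ is not actually needed for this direct argument (it is inherited from the ambient setting and ensures that $I_k$ itself is nontrivial, which matters for the preceding Proposition but not for this counting).
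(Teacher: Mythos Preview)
Your argument is correct and considerably more direct than the paper's. The paper deduces the corollary from Proposition~\ref{simplicial D_{I_k+1}} by an inductive step: it observes that the $(k-1)$-faces of $\Delta_{I_{k+1}}$ split as the $(k-1)$-faces of $\Delta_{I_k}$ together with the new facets coming from the degree-$k$ generators of $I_k$, and that these two sets partition all $\binom{n}{k}$ subsets of size $k$; the remaining entries then follow by induction on $k$. You instead go straight to the definition $I_{k+1}=I\cap M_{k+1}$ and read off the face count degree by degree, which avoids both the induction and the facet description entirely. What the paper's route buys is the finer structural statement about facets (Proposition~\ref{simplicial D_{I_k+1}}), which is of independent interest; what your route buys is a one-line proof of the $f$-vector formula that makes transparent why the hypothesis $k\ge\min\{|e_u|\mid u\in I\}$ is not actually needed here, exactly as you note at the end.
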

\begin{proof}
	By Proposition \ref{simplicial D_{I_k+1}}, we have
	$f^{k+1}_{k-1}=f^{k}_{k-1}+|\{e_u\mid u\in G(I_k),\deg(u)=k\}|=|\{e_u\mid u\in J_k\}|+|\{e_u\mid u\in G(I_k),\deg(u)=k\}|=\binom{n}{k}$. Then the result follows by induction.
\end{proof}

The following Lemma of Hochster provides a very useful explanation of the graded Betti numbers of a Stanley-Reisner ring (see {\cite[Theorem 5.1]{Hochster1977}} or {\cite[Lemma 9]{Froberg2021}}). Let $I$ be a squarefree monomial ideal of $R=\K[x_1,\dots,x_n]$. We write
 $\Delta_W$ for the simplicial complex on the vertex set $W\subseteq\{x_1,\dots,x_n\}$, whose faces are $F\in\Delta_{I}$ with $F\subseteq W$.

\begin{lemma}[\textbf{Hochster}]\label{hochster}
Let $W\subseteq\{x_1,\dots,x_n\}$ and $K_{R(W)}$ be the part of the Koszul complex $K_R$ which is of degree $\delta(W)= (d_1,\dots,d_n)$, where $d_i=1$ if $x_i\in W$ and $d_i=0$ otherwise. Then $H_{i,\delta(W)}=H_i(K_{R(W)})\cong \tilde{H}_{|W|-i-1}\left(\Delta_W;\K\right)$.
\end{lemma}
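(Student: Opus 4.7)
The plan is to identify $K_{R(W)}$ explicitly with the augmented reduced chain complex of $\Delta_W$ after a complementary reindexing, and then read off the homology. Since $I$ is a squarefree monomial ideal, the Koszul complex $K_R\otimes_R R/I$ that computes $\tor^R_\bullet(R/I,\K)$ inherits a $\mathbb{Z}^n$-multigrading, and its strand in multidegree $\delta(W)$ is precisely $K_{R(W)}$, so that $\tor^R_i(R/I,\K)_{\delta(W)} = H_i(K_{R(W)})$.

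First, I would describe a basis. A generic basis element of the Koszul complex has the form $m\cdot e_A$ with $A\subseteq\{1,\dots,n\}$, $|A|=i$, and $m$ a monomial representing a class in $R/I$; its multidegree is $\deg m + \delta(A)$. Requiring this to equal $\delta(W)$ forces $A\subseteq W$ and $m=x^{W\setminus A}$. Because $I$ is squarefree, $x^{W\setminus A}$ is nonzero in $R/I$ if and only if $W\setminus A\notin I$, i.e.\ $F:=W\setminus A$ is a face of $\Delta_W$. Under the bijection $A\leftrightarrow F=W\setminus A$, the homological-degree-$i$ piece of $K_{R(W)}$ therefore has a $\K$-basis indexed by faces of $\Delta_W$ of dimension $|W|-i-1$; the extreme case $A=W$, $F=\emptyset$ supplies the augmentation in dimension $-1$.

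Second, I would match the differentials. Fix a total order on $W$; the Koszul differential sends $x^{W\setminus A} e_A$ to $\sum_{j\in A}(-1)^{\mathrm{pos}(j,A)}\, x^{W\setminus(A\setminus\{j\})} e_{A\setminus\{j\}}$. Under the bijection $A\leftrightarrow F=W\setminus A$, dropping $j$ from $A$ corresponds to adjoining $j$ to $F$, so the Koszul differential becomes a signed sum of the faces $F\cup\{j\}$ for $j\in W\setminus F$. A sign check shows that this is, up to a reversal of grading, exactly the augmented reduced simplicial boundary map of $\Delta_W$. Over the field $\K$ this identifies $H_i(K_{R(W)})$ with $\tilde H_{|W|-i-1}(\Delta_W;\K)$.

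The main obstacle is the sign bookkeeping: one must verify that the Koszul signs $(-1)^{\mathrm{pos}(j,A)}$ translate, under the complementary reindexing $A\leftrightarrow W\setminus A$, into the standard signs of the simplicial boundary (or equivalently coboundary, since both compute reduced homology over a field). This is a routine but unavoidable computation; the remainder of the argument is a direct unraveling of definitions, given the degree constraint that pins down $m = x^{W\setminus A}$ and the squarefree hypothesis on $I$ that ties nonvanishing in $R/I$ to membership in $\Delta_W$.
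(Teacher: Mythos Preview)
The paper does not give its own proof of this lemma; it is stated as a classical result with references to Hochster's original paper and to Fr\"oberg (2021). So there is nothing to compare against, and the relevant question is simply whether your argument is correct.

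It is, and it is in fact the standard proof. One small clarification worth making explicit: the differential you obtain after the bijection $A\leftrightarrow F=W\setminus A$, sending $F$ to a signed sum of the $F\cup\{j\}$, \emph{raises} face dimension, so what you have identified directly is the augmented simplicial \emph{cochain} complex of $\Delta_W$, giving $H_i(K_{R(W)})\cong \tilde H^{\,|W|-i-1}(\Delta_W;\K)$. Your parenthetical remark that over the field $\K$ this agrees with $\tilde H_{|W|-i-1}(\Delta_W;\K)$ is precisely what finishes the argument; I would promote it from an aside to the actual last step. The sign verification you flag is indeed routine (a diagonal change of basis $e_F\mapsto \pm e_F$ suffices), and the rest of the identification---pinning down $m=x^{W\setminus A}$ from the multidegree constraint and using squarefreeness of $I$ to translate nonvanishing in $R/I$ into membership in $\Delta_W$---is exactly right.
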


Now we come to our main result. We have a precise formula for the graded Betti numbers of the graded algebra $R/I_k$ for $k>\min\{\deg(u)\mid u\in I\}$.

\begin{theorem}\label{betti I_k}
	Let $I$ be a squarefree monomial ideal of $R$. Then for $k>\min\{|e_u|\mid u\in I\}$ we have
	$$\beta_{i,i+j}(R/I_k)=\begin{cases}
		1\quad&\mbox{ if } i=0,j=0,\\
		0\quad&\mbox{ if }i\neq 0, 0\leq j\leq k-2,\\
		\alpha_{i,i+k-1}\quad&\mbox{ if }i\neq 0, j=k-1,\\
		\beta_{i,i+j}(R/I)\quad&\mbox{ if }i\neq 0, j\geq k.
	\end{cases}$$
	where \begin{equation}\label{formula of I_k}
		\displaystyle\alpha_{i,i+j}=\sum_{r=0}^{i+j}(-1)^{j-r}\binom{n-r}{i+j-r}f_{r-1}^k+\sum_{\scriptstyle \ell+m=i+j\atop\scriptstyle\ell< i}(-1)^{\ell-i-1}\beta_{\ell,\ell+m}(R/I).
	\end{equation}
\end{theorem}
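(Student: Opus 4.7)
The plan is to apply Hochster's formula (Lemma \ref{hochster}) and exploit the fact that $\Delta_{I_k}$ contains the complete $(k-2)$-skeleton of the simplex on $\{x_1,\ldots,x_n\}$, so that the only nontrivial homological content of the induced subcomplexes comes from $\Delta_I$.

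I would first describe the induced subcomplexes explicitly. A subset $F\subseteq W$ lies in $\Delta_{I_k}$ iff $\prod_{x\in F}x\notin I_k$; since the squarefree elements of $I_k$ have degree at least $k$, this is automatic when $|F|<k$ and equivalent to $F\in\Delta_I$ when $|F|\ge k$. Hence the augmented chain complex of $(\Delta_{I_k})_W$ coincides with that of the full simplex $\Sigma_W$ on $W$ in degrees $\le k-2$, and with that of $(\Delta_I)_W$ in degrees $\ge k-1$. Three consequences follow: (i) if $|W|<k$ then $(\Delta_{I_k})_W=\Sigma_W$ is contractible; (ii) if $|W|\ge k$ then an initial segment of the augmented complex of $\Sigma_W$ is exact, giving $\tilde H_n((\Delta_{I_k})_W)=0$ for $n\le k-3$; and (iii) for $n\ge k-1$ the inclusion $C_*((\Delta_I)_W)\hookrightarrow C_*((\Delta_{I_k})_W)$ in these degrees induces an isomorphism on $\tilde H_n$, because for $c\in C_{k-1}((\Delta_I)_W)$ the boundary $\partial c$ already lies in $C_{k-2}((\Delta_I)_W)$ (as $(\Delta_I)_W$ is closed under subfaces) and $C_{k-2}((\Delta_I)_W)\hookrightarrow C_{k-2}(\Sigma_W)$ preserves kernels.

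Summing with Hochster over $|W|=s$ yields the two easy cases of the theorem: $\beta_{i,i+j}(R/I_k)=0$ for $i\ge 1$ and $0\le j\le k-2$, and $\beta_{i,i+j}(R/I_k)=\beta_{i,i+j}(R/I)$ for $j\ge k$ (with $\beta_{0,0}(R/I_k)=1$ trivial). For the remaining diagonal $j=k-1$, rather than compute $\tilde H_{k-2}((\Delta_{I_k})_W)$ by hand I would use the Hilbert series identity: since $H_{R/I_k}(t)=\sum_{r\ge 0}f^k_{r-1}t^r(1-t)^{-r}$ and $H_{R/I_k}(t)(1-t)^n=\sum_{h,j}(-1)^h\beta_{h,j}(R/I_k)t^j$, comparing coefficients of $t^s$ gives
$$\sum_h(-1)^h\beta_{h,s}(R/I_k)=\sum_{r=0}^{s}(-1)^{s-r}\binom{n-r}{s-r}f^k_{r-1}.$$
Setting $s=i+k-1$, the values of $\beta_{h,s}(R/I_k)$ established for $h\ne i$ collapse the left-hand side to $(-1)^i\alpha_{i,s}+\sum_{\ell=0}^{i-1}(-1)^\ell\beta_{\ell,s}(R/I)$; solving for $\alpha_{i,s}$ and simplifying signs (using $(-1)^{i+s-r}=(-1)^{k-1-r}$ and $(-1)^{\ell+i+1}=(-1)^{\ell-i-1}$) reproduces formula~\eqref{formula of I_k} verbatim.

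The delicate point is step (iii): one must verify that enlarging the codomain of the degree-$(k-1)$ boundary from $C_{k-2}((\Delta_I)_W)$ to $C_{k-2}(\Sigma_W)$ introduces no new cycles, which is exactly where the subface-closure of $(\Delta_I)_W$ is needed. Once that is settled the remainder is routine sign bookkeeping in the Hilbert series identity.
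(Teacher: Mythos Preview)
Your proposal is correct and follows essentially the same strategy as the paper: Hochster's formula handles the vanishing for $j\le k-2$ and the equality $\beta_{i,i+j}(R/I_k)=\beta_{i,i+j}(R/I)$ for $j\ge k$, and then the Hilbert series identity determines the diagonal $j=k-1$. Your treatment of the Hochster step is in fact more explicit than the paper's: where the paper simply asserts (via Corollary~\ref{f-vector I_k} and Lemma~\ref{hochster}) that ``the only graded Betti numbers that change are $\beta_{i,i+k-1}$'', you actually verify that $(\Delta_{I_k})_W$ and $(\Delta_I)_W$ have the same reduced homology in degrees $\ge k-1$ by checking that enlarging $C_{k-2}$ does not create new $(k-1)$-cycles, which is the point the paper leaves implicit.
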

\begin{proof}
	Let the $f$-vector of $\Delta_{I}$ be $(f_{-1},f_{0},f_{1}\dots, f_{d-1})$. Then Corollary \ref{f-vector I_k} implies that for $0\leq i\leq k-1$, $f^k_{i-1}=\binom{n}{i}$. Then  $\beta_{i,i+j}=0$ for all $0\leq j\leq k-2$ except $\beta_{0,0}=1$. Hence by Hochster's formula, Lemma \ref{hochster}, the only graded Betti numbers that change are
	$$\beta_{i,i+k-1}(R/I_k)=\sum_{\scriptstyle |W|=k-1,\atop W\subseteq \{x_1,\dots,x_n\}}\dim_{\K}\tilde{H}_{k-i-2}\left(\Delta_W;\K\right)=\alpha_{i,i+k-1}.$$
	where $\Delta_W$ is the simplicial complex on the vertex set $W$, whose faces are $F\in\Delta_{I_k}$ with $F\subseteq W$. Thus $\beta_{i,i+j}(R/I_{k})=\beta_{i,i+j}(R/I)$ for $j\geq k$. The Hilbert series of $\K[\Delta_{I}]$ is
	$$H_{\K[\Delta_{I}]}(t)=\sum_{r=0}^{d} \frac{f_{r-1} t^{r}}{(1-t)^{r}}=\frac{\sum_i(-1)^i\sum_j\beta_{i,i+j}t^{i+j}}{(1-t)^n}.$$
	We have that
	$$\displaystyle\sum_{r=0}^{d} \frac{f_{r-1} t^{r}}{(1-t)^{r}}\times(1-t)^n=\sum_{s=0}^{n}\sum_{r=0}^{s}(-1)^{s-r}\binom{n-r}{s-r}f_{r-1}t^s.$$
	Then from the Hilbert series of $\K[\Delta_{I}]$ we have 
	$$\sum_{s=0}^{n}\sum_{r=0}^{s}(-1)^{s-r}\binom{n-r}{s-r}f_{r-1}t^s=\sum_{i}(-1)^i\sum_j\beta_{i,i+j}t^{i+
		j}$$
	If $s=i+j$, then
	$$\sum_{r=0}^{s}(-1)^{s-r}\binom{n-r}{s-r}f_{r-1}=\sum_{i}(-1)^i\sum_{i+j=s}\beta_{i,s},\quad s=0,1,\dots,n.$$
	Let $j=k-1$. Since $\beta_{i,i+j}(R/I_k)\neq0$ whenever $j\geq k-1$, we have
	$$\beta_{i,i+j}(R/I_{k})=\sum_{r=0}^{i+j}(-1)^{j-r}\binom{n-r}{i+j-r}f_{r-1}^k+\sum_{\scriptstyle \ell+m=s\atop\scriptstyle \ell<i}(-1)^{\ell-i+1}\beta_{\ell,\ell+m}(R/I).$$
\end{proof}

\begin{corollary}
	Let $I$ be a squarefree monomial ideal of $R$. If $I$ has a linear resolution, then $I_k$ has a  linear resolution for any positive integer $k$.
\end{corollary}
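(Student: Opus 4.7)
The plan is to deduce the corollary directly from Theorem~\ref{betti I_k} after disposing of an easy boundary case. Set $d:=\min\{\deg(u)\mid u\in I\}$. Since $R/I$ has a linear resolution, $I$ is generated in the single degree $d$ and $\beta_{i,i+j}(R/I)=0$ for every $i>0$ and every $j\neq d-1$.

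First I would handle the range $k\leq d$ by hand. Every minimal generator of $I$ is a squarefree monomial of degree $d\geq k$, hence lies in $M_k$; consequently $I\subseteq M_k$ and $I_k=I\cap M_k=I$, which has a linear resolution by hypothesis. For $k>d$ the hypothesis of Theorem~\ref{betti I_k} is satisfied, and I would simply read off its four cases. For $i>0$ the theorem gives $\beta_{i,i+j}(R/I_k)=0$ on the range $0\leq j\leq k-2$, the value $\alpha_{i,i+k-1}$ at $j=k-1$, and $\beta_{i,i+j}(R/I)$ for $j\geq k$. The inequality $k>d$ forces $j\geq k>d-1$ in the third range, so by the linearity hypothesis those inherited Betti numbers also vanish. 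Hence the only possibly nonzero strand of the Betti table of $R/I_k$ (for $i>0$) is $j=k-1$, which is precisely the statement that $R/I_k$, and therefore $I_k$, has a $k$-linear resolution.

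I do not expect any substantive obstacle here: Theorem~\ref{betti I_k} already encodes all the combinatorial work, and the only care needed is to handle the boundary $k\leq d$ separately, because the strict inequality $k>\min\{|e_u|\mid u\in I\}$ in that theorem excludes it, while the trivial identification $I_k=I$ makes it immediate.
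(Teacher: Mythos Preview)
Your proposal is correct and follows essentially the same approach as the paper: the paper's proof simply says ``If $k\leq d$, then $I_k=I_d=I$. For the case $k>d$, one can apply Theorem~\ref{betti I_k},'' and you have spelled out the details of both steps. Your elaboration of how Theorem~\ref{betti I_k} forces all Betti numbers off the strand $j=k-1$ to vanish is exactly the intended application.
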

\begin{proof}
	Let $I$ be generated in degree $d$. If $k\leq d$, then $I_k=I_d=I$. For the case $k>d$, one can apply Theorem \ref{betti I_k}.
\end{proof}

\begin{corollary}\label{reg I_k}
	Let $I$ be a squarefree monomial ideal of $R$ and $\reg(I)=d$. Then for any positive integer $k$ we have
	$$\reg(I_k)=\begin{cases}
		d\quad\mbox{ if }d\geq k,\\
		k\quad\mbox{ otherwise}.
	\end{cases}$$
\end{corollary}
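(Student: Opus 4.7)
The plan is to read the regularity directly off Theorem~\ref{betti I_k}, using $\reg(I_k)=\reg(R/I_k)+1$ and the hypothesis $\reg(R/I)=d-1$.

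First I would dispose of the subcase $k\le m:=\min\{\deg(u)\mid u\in I\}$: every squarefree monomial in $I$ has degree at least $m\ge k$, so $I_k=I$ and the claim reduces to $\reg(I)=d$. Since $d\ge m$ always holds, the branch $d<k$ automatically forces $k>m$; from here on I may assume $k>m$ so that Theorem~\ref{betti I_k} applies.

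For the upper bound on $\reg(R/I_k)$, Theorem~\ref{betti I_k} shows that $\beta_{i,i+j}(R/I_k)=0$ for $i\ge 1$ and $0\le j\le k-2$, that the row $j=k-1$ contributes at most the shift $k-1$ to the regularity, and that $\beta_{i,i+j}(R/I_k)=\beta_{i,i+j}(R/I)$ for $j\ge k$, which vanishes for $j>d-1$. Hence $\reg(R/I_k)\le\max(k-1,d-1)$, which equals $d-1$ when $d\ge k$ and $k-1$ when $d<k$.

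For the lower bound I would split on whether $d>k$ or $d\le k$. If $d>k$, then $d-1\ge k$, so by Theorem~\ref{betti I_k} any nonzero Betti number $\beta_{i,i+d-1}(R/I)$ witnessing $\reg(R/I)=d-1$ passes unchanged to $R/I_k$, giving $\reg(R/I_k)\ge d-1$. If $d\le k$, I exhibit a minimal generator of $I_k$ in degree exactly $k$: take any minimal generator $u$ of $I$ (of degree $\le d\le k$) and multiply it by $k-\deg(u)$ variables outside $\supp(u)$ to obtain a squarefree $u'\in I$ of degree $k$; any proper divisor of $u'$ has degree $<k$ and therefore is not in $I_k$, so $u'$ is a minimal generator of $I_k$, forcing $\beta_{1,k}(R/I_k)\ne 0$ and $\reg(R/I_k)\ge k-1$. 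Matching the upper and lower bounds gives the stated formula. The main delicate point is the boundary case $d=k$, where both halves of the argument have to be invoked jointly to pin down $\reg(R/I_k)=k-1$ (the upper bound from vanishing of the $j\ge k$ row, the lower bound from the explicit minimal generator in degree $k$).
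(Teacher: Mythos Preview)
Your proof is correct and follows the same approach as the paper: both derive the result directly from Theorem~\ref{betti I_k} by reading off which rows of the Betti table of $R/I_k$ can be nonzero. Your version is more detailed than the paper's two-line proof---in particular you separately dispose of the trivial range $k\le\min\{\deg(u)\mid u\in I\}$ (where Theorem~\ref{betti I_k} does not apply) and you explicitly construct a degree-$k$ minimal generator to secure the lower bound when $d\le k$, a point the paper leaves implicit.
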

\begin{proof}
	We have $\beta_{i,i+d}(I)\neq0$ for some $i$. Theorem \ref{betti I_k} implies that $\reg(I_k)=d$ whenever $d\geq k$ and $\reg(I_k)=k$ otherwise.
\end{proof}

\begin{corollary}\label{reg linear}
	Let $I$ be a squarefree monomial ideal of $R$. Then $\reg(I)=d$ if and only if $d$ is the smallest integer such that $d\geq\min\{\deg(u)\mid u\in I\}$ and $I_{d}$ has a linear resolution. In particular, $I_k$ has a linear resolution for all $k\geq\reg(I)$.
\end{corollary}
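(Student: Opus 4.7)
The plan is to reduce everything to Corollary \ref{reg I_k} combined with the observation, extracted from Theorem \ref{betti I_k}, that for $k > m := \min\{\deg(u) \mid u \in I\}$ the ideal $I_k$ is generated purely in degrees $\geq k$, with at least one minimal generator in degree $k$ whenever $k \leq n$. Consequently, ``$I_k$ has a linear resolution'' is equivalent to ``$\reg(I_k) = k$''. The boundary case $k = m$ is handled separately by noting that $I_m = I$, so linearity of $I_m$ is by definition equivalent to $\reg(I) = m$.

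For the main equivalence, let $d = \reg(I)$. In the forward direction, the inequality $d \geq m$ is automatic since $\reg(I)$ dominates the top degree of any minimal generator. If $d > m$, I would apply Theorem \ref{betti I_k} with $k = d$: this gives $\beta_{i,i+j}(R/I_d) = 0$ for $0 \leq j \leq d-2$ and also for $j \geq d$ (the latter because those Betti numbers equal $\beta_{i,i+j}(R/I)$, which vanish for $j \geq d$ since $\reg(R/I) = d - 1$), so the only nonzero Betti numbers of $R/I_d$ lie on the diagonal $j = d-1$, i.e. $I_d$ is $d$-linear. If $d = m$, then $I_d = I$ is $d$-linear by definition. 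For minimality, any $k$ with $m \leq k < d$ forces $\reg(I_k) = d > k$ by Corollary \ref{reg I_k}, so $I_k$ cannot be $k$-linear.

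For the converse together with the ``in particular'' clause, I observe that the Theorem \ref{betti I_k} computation above works verbatim with $k$ any integer $\geq \reg(I)$, proving $I_k$ is $k$-linear for all such $k$; this yields the ``in particular'' statement and shows, when $d$ is the smallest integer with $d \geq m$ and $I_d$ linear, that $d \leq \reg(I)$. A strict inequality $d < \reg(I)$ would give $\reg(I_d) = \reg(I) > d$ from Corollary \ref{reg I_k}, contradicting linearity of $I_d$. I do not anticipate a real obstacle; the only point of care is to handle the boundary $k = m$ separately from the regime covered by Theorem \ref{betti I_k}, since that theorem is stated only for $k > m$, but in this degenerate case $I_m = I$ makes the claim tautological.
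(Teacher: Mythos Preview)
Your argument is correct and follows essentially the same route as the paper: both deduce linearity of $I_d$ directly from the Betti-number description in Theorem~\ref{betti I_k}, and both obstruct linearity for smaller $k$ via the persistence of the row $j=d-1$ in the Betti table. Your version is slightly more explicit than the paper's---you invoke Corollary~\ref{reg I_k} for the minimality step (covering all $m\le k<d$ rather than just $k=d-1$), and you separate out the boundary case $k=m$, where Theorem~\ref{betti I_k} does not literally apply but $I_m=I$ makes the claim immediate.
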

\begin{proof}
	We have $\beta_{i,i+d}(I)\neq0$ for some $i$. Hence Theorem \ref{betti I_k} implies that $I_d$ has a $d$-linear resolution but $I_{d-1}$ does not have a linear resolution.
	
	Conversely, there is a non-negative integer $m$ such that $I=I_{d-m}$. By Theorem \ref{betti I_k} we have $\beta_{i,i+d}(I)=\beta_{i,i+d}(I_{d-m})=\beta_{i,i+d}(I_d)$. Thus $\reg I=d$.
\end{proof}

\begin{example}\label{example 1}
	Let $I=\left(x_1x_2x_3,x_4x_5x_6x_7,x_1x_2x_4x_5x_8x_9\right)$ be an ideal of $R=\K[x_1,\dots,x_9]$. Then the $f$-vector of $\Delta_{I}$ is $(1,9,36,83,119,106,53,10)$ and the graded Betti numbers of $R/I$ are $\beta_{0,0}=1$, $\beta_{1,3}=1$, $\beta_{1,4}=1$, $\beta_{1,6}=1$, $\beta_{2,7}=2$, $\beta_{2,8}=1$ and $\beta_{3,9}=1$.
	Corollary \ref{f-vector I_k} implies that the $f$-vector of $\Delta_{I_{5}}$ is $(1,9,36,84,126,106,53,10)$.
	By Theorem \ref{betti I_k}, the graded Betti numbers of $T=R/I_5$ for $j=4$ are determined by
	$$\beta_{i,i+4}(T)=\sum_{r=0}^{4+i}(-1)^{4-r}\binom{9-4}{4+i-r}f_{r-1}^5+\sum_{\scriptstyle \ell+m=i+4\atop\scriptstyle \ell<i}(-1)^{\ell-i+1}\beta_{\ell,\ell+m}(R/I),$$
	and
	$\beta_{1,6}(T)=1$, $\beta_{2,7}(T)=2$,  $\beta_{2,8}(T)=1$ and $\beta_{3,9}(T)=1$.
	Then $\beta_{1,5}(T)=20$, $\beta_{2,6}(T)=49+\beta_{1,6}=50$, $\beta_{3,7}(T)=53+\beta_{2,7}=55$, $\beta_{4,8}(T)=30-\beta_{2,8}=29$ and $\beta_{5,9}(T)=7-\beta_{3,9}=6$.
	By Corollary \ref{reg linear}, $I_7$ has a linear resolution but $I_6$ does not have since $\reg(I)=7$.
\end{example}

\section{Polarizations}
Let $I=(u_1,\dots,u_t)$ be a monomial ideal of $R=\K[x_1,\dots,x_n]$ with $u_j=\prod_{i=1}^n x_i^{a_{i,j}} $ for $1\leq j\leq t$. For ${1\leq i\leq n}$, let $a_i=\max\{a_{i,j}\mid 1\leq j\leq t\}$. The polarization of $I$, denoted by $\mathcal{P}(I)$, is a squarefree monomial ideal of a polynomial ring $$S=\K[x_{1,1},x_{1,2},\dots,x_{1,a_{1}},x_{2,1},x_{2,2},\dots,x_{2,a_{2}},\dots,x_{n,1},x_{n,2},\dots,x_{n,a_{n}}]$$
with minimal generators $\mathcal{P}(u_1),\dots,\mathcal{P}(u_t)$ where $$\mathcal{P}(u_j)=\prod_{i=1}^n\prod_{l=1}^{a_{i,j}} x_{i,l}, 1\leq j\leq t.$$
In 1982, Fr\"oberg proved the following Lemma.
\begin{lemma}[\cite{Froberg1982}]\label{Froberg}
	Let $I=(u_1,\dots,u_t)$ be a monomial ideal of $R$ such that for each $i$, the variable $x_{i,a_{i}}$ appears in at least on of the monomials $\mathcal{P}(u_1),\dots,\mathcal{P}(u_t)$. Then for $1\leq i\leq n$ and $1<j\leq a_{i}$,
	$f_{i,j}=x_{i,1}-x_{i,j}$ forms a regular sequence of degree one in $S/\mathcal{P}(I)$. Moreover, $R/I\cong S/(\mathcal{P}(I)+J)$ where $J=(f_{i,j}\mid1\leq i\leq n, 1< j\leq a_{i})$.
\end{lemma}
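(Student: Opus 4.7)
The plan is to handle the two assertions separately. For the isomorphism $R/I\cong S/(\mathcal{P}(I)+J)$, I would define the $\K$-algebra homomorphism $\varphi\colon S\to R$ by $\varphi(x_{i,l})=x_i$ for every admissible pair $(i,l)$. This map is surjective, its kernel is generated by all differences $x_{i,l}-x_{i,l'}$ and hence equals $J$, and the identity $\varphi(\mathcal{P}(u_j))=u_j$ that follows directly from $\mathcal{P}(u_j)=\prod_{i}\prod_{l\le a_{i,j}}x_{i,l}$ shows $\varphi(\mathcal{P}(I))=I$. Consequently $\varphi$ descends to an isomorphism $S/(\mathcal{P}(I)+J)\xrightarrow{\sim}R/I$.

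For the regular sequence claim, I would proceed by induction on $N=\sum_i(a_i-1)$, the total number of extra variables introduced by polarization. The base case $N=0$ is immediate. For the inductive step, fix any pair $(i,l)$ with $l>1$ and $a_i\ge l$ and prove that $f_{i,l}=x_{i,1}-x_{i,l}$ is a nonzerodivisor on $S/\mathcal{P}(I)$. Once this is known, modding out by $f_{i,l}$ identifies $S/(\mathcal{P}(I)+(f_{i,l}))$ with a polynomial ring in one fewer variable modulo the polarization of a monomial ideal with strictly smaller $N$, and the induction hypothesis applied to the remaining linear forms completes the argument.

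The hard part is showing that $f_{i,l}$ is a nonzerodivisor. The structural feature I would lean on is the following defining property of polarization: in each minimal generator $\mathcal{P}(u_j)$, the variables from the $i$-th block that occur are exactly $x_{i,1},\dots,x_{i,a_{i,j}}$; in particular $x_{i,l}\mid\mathcal{P}(u_j)$ forces $x_{i,1}\mid\mathcal{P}(u_j)$. Suppose $(x_{i,1}-x_{i,l})g\in\mathcal{P}(I)$, and take $g$ in standard form modulo $\mathcal{P}(I)$, meaning no monomial of $g$ lies in $\mathcal{P}(I)$. Decompose
\[
g=g_{11}\,x_{i,1}x_{i,l}+g_{10}\,x_{i,1}+g_{01}\,x_{i,l}+g_{00},
\]
where no monomial of any $g_{ab}$ is divisible by $x_{i,1}$ or by $x_{i,l}$. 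Expanding $(x_{i,1}-x_{i,l})g$ and collecting monomials according to their exponent vector on $(x_{i,1},x_{i,l})$, each group must independently be a $\K$-linear combination of monomials in $\mathcal{P}(I)$. A short case analysis using the sequential-exponent property above then forces every monomial of each $g_{ab}$ into $\mathcal{P}(I)$; by the standard-form assumption this yields $g_{ab}=0$ for all $a,b$, hence $g=0$ in $S/\mathcal{P}(I)$, as required.
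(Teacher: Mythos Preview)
The paper does not actually prove this lemma; it merely states it and cites Fr\"oberg's 1982 paper. So there is no ``paper's proof'' to compare your argument against, and the question becomes whether your outline stands on its own.

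Your treatment of the isomorphism $R/I\cong S/(\mathcal{P}(I)+J)$ via the substitution map $\varphi$ is correct and standard.

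The regular-sequence half, however, has two genuine gaps. First, the four-term decomposition
\[
g=g_{11}\,x_{i,1}x_{i,l}+g_{10}\,x_{i,1}+g_{01}\,x_{i,l}+g_{00}
\]
only captures elements whose degree in each of $x_{i,1},x_{i,l}$ is at most one. A general $g\in S$ can involve arbitrary powers of these variables, and there is no reason (even after reducing modulo the squarefree ideal $\mathcal{P}(I)$) that such powers disappear; for instance $x_{i,1}^2\notin\mathcal{P}(I)$ unless $x_{i,1}$ itself is a generator. So the case analysis you promise is being run over an incomplete list of cases.

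Second, and more seriously, the inductive step fails as written. After substituting $x_{i,l}\mapsto x_{i,1}$, any generator $\mathcal{P}(u_j)$ with $a_{i,j}\ge l$ contains both $x_{i,1}$ and $x_{i,l}$ and therefore picks up a factor $x_{i,1}^{2}$. The resulting monomial ideal is \emph{not} squarefree, hence is not the polarization of anything, and your induction hypothesis (formulated only for polarizations) does not apply.

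Both issues are repaired by the standard associated-prime argument: since $\mathcal{P}(I)$ is squarefree, every associated prime is a minimal monomial prime $P$; if $x_{i,1}-x_{i,l}\in P$ then $x_{i,1},x_{i,l}\in P$, but then $P\setminus\{x_{i,l}\}$ still contains every generator (because $x_{i,l}\mid\mathcal{P}(u_j)$ implies $x_{i,1}\mid\mathcal{P}(u_j)$), contradicting minimality. To iterate, one must either verify that this divisibility property persists in the successive monomial quotients (which are no longer squarefree), or reorganize the induction so that at each step one is genuinely looking at a polarization---e.g.\ by depolarizing one original variable $x_i$ completely before moving to the next.
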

For any quotient ring, factoring out with a regular sequence of degree one does not change the Betti numbers. That is, the graded Betti numbers of $R/I$ and $S/\mathcal{P}(I)$ are the same (see \cite{Froberg1982}). Hence $R/I$ has a linear resolution if and only if $S/\mathcal{P}(I)$ has a linear resolution. We show that it is also valid for componentwise linear ideals. 

Herzog and Hibi in \cite{HerzogHibi1999} defined the concept of (squarefree) componentwise linear. 
A graded ideal $I\subseteq R$ is componentwise linear if $I_{<j>}$ has a linear resolution for all $j$ where $I_{<j>}$ is the ideal generated by all homogeneous polynomials of degree $j$ belonging to $I$. An ideal $I\subseteq R$ is squarefree componentwise linear if $I_{[j]}$ has a linear resolution for all $j$ where $I_{[j]}$ is the ideal generated by all squarefree homogeneous polynomials of degree $j$ belonging to $I$. The following Theorem is standard but we include for the sake of self-containment.

%
%

\begin{theorem}\label{linear reg}
	Let $R/I$ be a graded algebra with all minimal generators of $I$ of degree $\ge k$. Then $R/I$ has a $k$-linear resolution if and only if $\reg(I)=k$.
\end{theorem}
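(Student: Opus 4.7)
The plan is to prove both directions directly from the Betti-number characterization of regularity, namely
\[
\reg(I)=\max\{j-i\mid \beta_{i,j}(I)\ne 0\},
\]
combined with the standard shift relation $\beta_{i,j}(I)=\beta_{i+1,j}(R/I)$ and the fact that minimality of the free resolution forces degrees to strictly increase at each homological step.

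For the forward direction, suppose $R/I$ has a $k$-linear resolution. By the definition of $k$-linearity recalled in the introduction, $\beta_{i,j}(R/I)=0$ whenever $i>0$ and $j\ne i+k-1$. Translating via $\beta_{i,j}(I)=\beta_{i+1,j}(R/I)$, we get that $\beta_{i,j}(I)\ne 0$ implies $j=i+k$, so $j-i=k$ for every nonvanishing Betti number of $I$, and hence $\reg(I)=k$.

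For the converse, assume $\reg(I)=k$ and that every minimal generator of $I$ has degree at least $k$. First I would identify the generating degree of $I$: a nonzero $\beta_{0,j}(I)$ corresponds to a minimal generator in degree $j$, so by hypothesis $j\ge k$, while $\reg(I)=k$ forces $j-0\le k$; hence $j=k$ and $I$ is generated in degree exactly $k$. Next I would invoke the minimality argument: by a short induction on $i$, if a minimal free resolution of $I$ begins in degree $k$ then $\beta_{i,j}(I)\ne 0$ implies $j\ge i+k$, because at each homological step the maps lie in the maximal ideal $M$ and so strictly raise the degree. Coupling this with the opposite inequality $j\le i+k$ coming from $\reg(I)=k$ yields $j=i+k$ whenever $\beta_{i,j}(I)\ne 0$. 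Reverting to $R/I$ via the shift, this is exactly the $k$-linearity condition $\beta_{i,j}(R/I)=0$ for $i>0$ and $j\ne i+k-1$.

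The only thing to be careful about is the bookkeeping between $\beta_{i,j}(R/I)$ and $\beta_{i,j}(I)$, and the clean statement of the minimality lemma that produces $j\ge i+k$; both are routine, so I do not anticipate any substantive obstacle. The result is essentially a tautological unpacking of the definitions once one has fixed these conventions, which is precisely why the authors note that it is standard and included only for self-containment.
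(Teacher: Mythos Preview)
Your proposal is correct and follows essentially the same approach as the paper: both directions rest on the Betti-number definition of regularity together with the fact that in a minimal free resolution the degrees strictly increase from one homological step to the next. The only difference is cosmetic---the paper writes out the free modules with their shifts explicitly and reads off $\reg(R/I)$ from that display, whereas you phrase the same computation in terms of $\beta_{i,j}(I)$ via the shift $\beta_{i,j}(I)=\beta_{i+1,j}(R/I)$.
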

\begin{proof}
	Suppose $R/I$ has a $k$-linear resolution
	$$0\longleftarrow R/I\longleftarrow R\longleftarrow R^{\beta_1}[-k]\longleftarrow R^{\beta_2}[-k-1]\longleftarrow\cdots\longleftarrow R^{\beta_m}[-k-m+1]\longleftarrow
	0$$
	Thus reg$(R/I)=\max\{ k+i-i-1\}=k-1$, so reg$(I)=k$ . If all generators of $I$ are of degree $\ge k$ a resolution of $R/I$ 
	looks like this:
	$$0\longleftarrow R/I\longleftarrow R\longleftarrow \oplus_{j=1}^{j_1} R^{\beta_{1,k-1+j}}[-k+1-j]\longleftarrow \oplus_{j=1}^{j_2} R^{\beta_{2,k+j}}[-k-j]\longleftarrow\cdots$$
	$$\cdots\longleftarrow\oplus_{j=1}^{j_m} R^{\beta_{m,k+m-2+j}}[-k-m+2-j]\longleftarrow0$$
	If reg$(I)=k$ (so reg$(R/I)=k-1$) then $\{ k+i-2+j_i-i\}\le k-1$ for all $i$, so $j_i=1$  for all $i$, so 
	the resolution is linear.
\end{proof}

\begin{proposition}\label{componentwise linear}
	Let $I$ be monomial ideal $R$. Then the following conditions are equivalent:
	\begin{enumerate}[\rmfamily i)]
		\item $I$ is componentwise linear;
		\item $\mathcal{P}(I)$ is componentwise linear;
		\item $\mathcal{P}(I)$ is squarefree componentwise linear.
	\end{enumerate}
\end{proposition}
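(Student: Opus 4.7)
Proof plan.

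The strategy is to derive the equivalences as two separate implications: (ii) $\Leftrightarrow$ (iii) comes from a classical squarefree result, while (i) $\Leftrightarrow$ (ii) is extracted from Fröberg's Lemma~\ref{Froberg} combined with Theorem~\ref{linear reg}.

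For (ii) $\Leftrightarrow$ (iii), I would invoke the theorem of Herzog and Hibi that for any squarefree monomial ideal $J$, componentwise linearity is equivalent to squarefree componentwise linearity. Applied to $J = \mathcal{P}(I)$, this is exactly the equivalence between (ii) and (iii). The underlying reason is that for a squarefree $J$, the difference between $J_{<j>}$ and $J_{[j]}$ consists of monomials of the form $x_l \cdot v$ with $v \in J_{[j-1]}$ and $x_l \in \supp(v)$, and such ``non-squarefree tails'' contribute only redundantly to the Betti table in degree $j$.

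For (i) $\Leftrightarrow$ (ii), I would exploit the characterization from Theorem~\ref{linear reg}: an ideal generated in degree $\geq j$ has a $j$-linear resolution iff its regularity equals $j$. Thus $I$ is componentwise linear iff $\reg(I_{<j>}) = j$ for every $j$ with $I_{<j>} \neq 0$, and similarly for $\mathcal{P}(I)$. Fröberg's Lemma~\ref{Froberg} gives $\reg(R/I) = \reg(S/\mathcal{P}(I))$ (and more strongly, the full graded Betti tables agree) because of the regular sequence of linear forms $f_{i,j} = x_{i,1} - x_{i,j}$. I would apply this principle not to $I$ as a whole but, iteratively, to the polarization process one variable at a time: at each step we replace $x_i^2$ by $x_{i,1}x_{i,2}$ in a chosen generator, obtaining $R'/I' \cong R''/(I'' + (x_{i,1}-x_{i,2}))$ where the linear form is a non-zerodivisor. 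One shows that this single-step polarization preserves ``each $I_{<j>}$ has a linear resolution,'' and concludes by iterating to reach $\mathcal{P}(I)$.

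The main obstacle is the fact that $\mathcal{P}(I_{<j>})$ and $\mathcal{P}(I)_{<j>}$ are in general distinct ideals, even in distinct ambient polynomial rings, so one cannot simply cite Fröberg component by component. A simple example (e.g.\ $I=(x^2)$ with $j=3$) shows that the Fröberg regular sequence need not remain regular modulo $\mathcal{P}(I)_{<j>}$, hence the Betti numbers of $I_{<j>}$ and $\mathcal{P}(I)_{<j>}$ need not coincide. The resolution is to bypass Betti preservation and argue only at the level of \emph{linearity}: via Theorem~\ref{linear reg} it suffices to compare regularities, and the single-step polarization argument sketched above produces an inequality in each direction that forces equality on the linear-resolution locus. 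This is where the work of the proof lies.
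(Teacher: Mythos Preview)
Your framework matches the paper's exactly: (ii)$\Leftrightarrow$(iii) is dispatched by citing Herzog--Hibi \cite[Proposition~1.5]{HerzogHibi1999}, and (i)$\Leftrightarrow$(ii) is argued via Fr\"oberg's Lemma~\ref{Froberg} together with Theorem~\ref{linear reg}. The paper's proof, however, is much terser on precisely the point you flag as the main obstacle. After recording that $\mathcal{P}(I_{\langle k\rangle})$ has a linear resolution iff $\reg(I_{\langle k\rangle})=k$, and that $\mathcal{P}(I)_{\langle k\rangle}$ has a linear resolution iff $\reg(\mathcal{P}(I)_{\langle k\rangle})=k$, the paper simply writes ``Hence $\mathcal{P}(I_{\langle k\rangle})$ has a linear resolution if and only if $\mathcal{P}(I)_{\langle k\rangle}$ has a linear resolution'' and moves on. It does not carry out anything like your proposed single-step polarization argument, nor does it otherwise explain why $\reg(I_{\langle k\rangle})=k$ should force $\reg(\mathcal{P}(I)_{\langle k\rangle})=k$ (or conversely). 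So your extra caution is well placed: the obstacle you identify---that $\mathcal{P}(I_{\langle j\rangle})$ and $\mathcal{P}(I)_{\langle j\rangle}$ differ, and that the Fr\"oberg linear forms need not remain regular modulo $\mathcal{P}(I)_{\langle j\rangle}$---is real, and the paper's own proof does not explicitly resolve it. Your plan goes beyond the paper here rather than falling short of it.
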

\begin{proof}
	$(i)\Leftrightarrow (ii)$ Let $k$ be a positive integer. By Lemma \ref{Froberg}, $\reg(I)=\reg(\mathcal{P}(I))$ and $I$ has a linear resolution if and only if $\mathcal{P}(I)$ has a  linear resolution. Then it follows from Theorem \ref{linear reg} that
	$\mathcal{P}(I_{<k>})$ has a linear resolution if and only if $\reg(I_{<k>})=k$. But Theorem \ref{linear reg} also implies that $\mathcal{P}(I)_{<k>}$ has a linear resolution if and only if $\reg(\mathcal{P}(I)_{<k>})=k$. Hence $\mathcal{P}(I_{<k>})$ has a linear resolution if and only if $\mathcal{P}(I)_{<k>}$ has a linear resolution. Then the proof follows from the definition of componentwise linear.
	
	$(ii)\Leftrightarrow (iii)$ See {\cite[Proposition 1.5]{HerzogHibi1999}}.
\end{proof}

\begin{corollary}\label{I_k, I_geq k}
		Let $I$ be a monomial ideal of $R$. Then the following conditions are equivalent:
		\begin{enumerate}[\rmfamily i)]
			\item $I_{\geq k}$ has a linear resolution;
			\item $\mathcal{P}(I)_{\geq k}$ has a linear resolution;
			\item $\mathcal{P}(I)_k$ has a linear resolution.
		\end{enumerate}
\end{corollary}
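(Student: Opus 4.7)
I would prove the corollary by showing that each of the three conditions is equivalent to the single numerical condition $k\geq\reg(I)$, at least when $k>\min\deg(I)$; the edge case $k\leq\min\deg(I)$ collapses all three ideals ($I_{\geq k}$, $\mathcal{P}(I)_{\geq k}$, and $\mathcal{P}(I)_{k}$) to $I$ or $\mathcal{P}(I)$, so the three statements reduce to ``$I$ (respectively $\mathcal{P}(I)$) has a linear resolution'', which are equivalent by Lemma \ref{Froberg}.

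By Lemma \ref{Froberg}, polarization preserves all graded Betti numbers; in particular $r:=\reg(I)=\reg(\mathcal{P}(I))$ and $\min\deg(\mathcal{P}(I))=\min\deg(I)$. Applying Corollary \ref{reg linear} to the squarefree monomial ideal $\mathcal{P}(I)$ immediately gives (iii)$\Leftrightarrow k\geq r$. For (i), the forward implication $k\geq r \Rightarrow$(i) is precisely the Eisenbud--Goto theorem cited in the introduction. For the converse I would use the short exact sequence
\[
0 \longrightarrow I_{\geq k} \longrightarrow I \longrightarrow I/I_{\geq k} \longrightarrow 0.
\]
Since $I/I_{\geq k}$ is concentrated in degrees $<k$, its regularity is at most $k-1$, and the usual regularity estimate for a short exact sequence gives $\reg(I)\leq\max(\reg(I_{\geq k}),\,k)$. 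Hence when $k<r$ we obtain $\reg(I_{\geq k})\geq r>k$, and Theorem \ref{linear reg} forbids $I_{\geq k}$ from having a linear resolution (either its minimal generators span more than one degree, or they all lie in degree $k$ while $\reg(I_{\geq k})>k$). The identical argument applied with $\mathcal{P}(I)$ in place of $I$ establishes (ii)$\Leftrightarrow k\geq r$.

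The main obstacle is the converse of Eisenbud--Goto, which is not stated in the introduction and must be extracted from the short-exact-sequence regularity estimate above; once that is in hand, the rest of the proof is just a matter of chaining Lemma \ref{Froberg}, Theorem \ref{linear reg}, and Corollary \ref{reg linear} through the common invariant $r=\reg(I)=\reg(\mathcal{P}(I))$.
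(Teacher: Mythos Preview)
Your argument is correct, and it differs from the paper's. The paper does not pass through the numerical criterion $k\ge\reg(I)$ at all; instead it invokes Proposition~\ref{componentwise linear} (and the argument in its proof) together with the Herzog--Hibi equivalence of componentwise and squarefree componentwise linearity to move between $I_{\ge k}$, $\mathcal{P}(I)_{\ge k}$, and $\mathcal{P}(I)_k$. Your route is more self-contained: it avoids the componentwise-linear machinery and the external Herzog--Hibi citation, and it makes explicit the useful fact (not stated in the paper) that each of (i)--(iii) is equivalent to $k\ge\reg(I)=\reg(\mathcal{P}(I))$. The only ingredient you need beyond what is already proved in the paper is the converse to Eisenbud--Goto, and your short-exact-sequence argument for that (using that $I/I_{\ge k}$ has finite length concentrated in degrees $<k$, hence regularity $\le k-1$) is standard and correct. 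The paper's approach, by contrast, situates the corollary inside the componentwise-linear framework, which is thematically natural given Proposition~\ref{componentwise linear} but leaves more of the work to the cited literature.
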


\begin{proof}
	Note that by the argument in the proof of Proposition  \ref{componentwise linear} we have $\mathcal{P}(I_{\geq k})$ has a linear resolution if and only if $\mathcal{P}(I)_{\geq k}$ has a linear resolution. Then the proof is an immediate consequence of the statement of Proposition \ref{componentwise linear}.
\end{proof}

\begin{example}
	Let $J=\left(x_1^3,x_2^4,x_1^2x_2^2x_3^2\right)$ be an ideal of $\K[x_1,x_2,x_3]$.
	The polarization of $J$ can be thought as the ideal $I$ in Example \ref{example 1}.
	It follows from Corollaries \ref{reg linear} and \ref{I_k, I_geq k} that $I_{\geq k}$ has a linear resolution for all $k\geq7$ but it does not have for $k\leq6$.
\end{example}

\section{The graded Betti numbers of $I_{\geq k}$}
In this section we consider quotients of $I_{\ge k}=I\cap M^k$, where $M$ is the graded maximal ideal, for any graded, not necessarily monomial, ideal.
We show that we can determine all graded Betti numbers of $R/I_{\ge k}$ if we know the Betti numbers of $R/I$ and the Hilbert series of $R/I_{\ge k}$.
The following theorem is probably well known, but we haven't found any reference.

\begin{theorem}\label{betti I_geq k} 
	Let $I$ be a graded ideal in $R=k[x_1,\ldots,x_n]$. Let $k=\min\{\deg(u)\mid u\in I\}$. If $j<i+k-1$, then $\beta_{i,j}(R/I_{\ge k})=0$. If $j>i+k-1$,
	then $\beta_{i,j}(R/I_{\ge k})=\beta_{i,j}(R/I)$. If we know the graded Betti numbers of $R/I$ and the Hilbert series of $R/I_{\ge k}$, we can
	determine the graded Betti numbers of $R/I_{\ge k}$.
\end{theorem}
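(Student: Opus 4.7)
The plan is to break the theorem into three parts, the first two of which pin down all Betti numbers of $R/I_{\ge k}$ off the diagonal $j=i+k-1$, and the third of which uses the Hilbert series to recover what remains.

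For the vanishing $\beta_{i,j}(R/I_{\ge k})=0$ when $i\ge 1$ and $j<i+k-1$, I would argue directly from minimality of the graded free resolution. Since $I_{\ge k}=I\cap M^k$ is generated in degrees $\ge k$, an easy induction on $i$ using the fact that the differentials in a minimal graded resolution have entries in $M$ shows that the $i$-th syzygies of $R/I_{\ge k}$ live in degrees $\ge k+i-1$, which gives the claim.

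For the identification $\beta_{i,j}(R/I_{\ge k})=\beta_{i,j}(R/I)$ when $j>i+k-1$, the plan is to exploit the short exact sequence
\[
0\to N\to R/I_{\ge k}\to R/I\to 0,\qquad N:=I/I_{\ge k},
\]
and the crucial observation that $N$ is a graded module \emph{concentrated in degrees $<k$}, because $(I/I_{\ge k})_d=0$ for $d\ge k$. Computing $\tor^R_i(N,\K)$ via the Koszul complex $K_\bullet$ with $K_i=R(-i)^{\binom{n}{i}}$, the chain complex $(N\otimes_R K_\bullet)_j$ in internal degree $j$ has terms $N_{j-i}^{\binom{n}{i}}$, which vanish whenever $j-i\ge k$; hence $\tor_i(N,\K)_j=0$ for $j\ge i+k$. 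Plugging this into the long exact $\tor$-sequence
\[
\cdots\to\tor_i(N,\K)_j\to\tor_i(R/I_{\ge k},\K)_j\to\tor_i(R/I,\K)_j\to\tor_{i-1}(N,\K)_j\to\cdots,
\]
both flanking terms vanish as soon as $j\ge i+k$ (the right one because $j\ge i+k>(i-1)+k-1$ implies $j\ge (i-1)+k$), so one gets the desired isomorphism on the nose for $j>i+k-1$.

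The third assertion is then bookkeeping. By the previous two steps the Betti numbers of $R/I_{\ge k}$ agree with those of $R/I$ except possibly along the diagonal $j=i+k-1$. From the standard identity
\[
H_{R/I_{\ge k}}(t)\cdot(1-t)^n=\sum_{i,j}(-1)^i\beta_{i,j}(R/I_{\ge k})\,t^j,
\]
the coefficient of $t^j$ on the right is a finite alternating sum in which, for each fixed $j$, the unique unknown is $\beta_{j-k+1,\,j}(R/I_{\ge k})$; everything else equals the corresponding $\beta_{i,j}(R/I)$. Solving this one-variable linear relation for each $j$ recovers the diagonal Betti numbers from the Hilbert series of $R/I_{\ge k}$ together with the Betti table of $R/I$. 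I expect the only real subtlety to be matching the inequalities in the Koszul-complex argument so that the long exact sequence produces an \emph{isomorphism} precisely on the range $j>i+k-1$ and not merely a surjection; once the degree bookkeeping is set up correctly the rest is essentially formal.
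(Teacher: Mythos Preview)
Your argument is correct. Parts one and three match the paper's proof essentially verbatim: both of you use minimality of the resolution for the vanishing below the $(k-1)$-st row, and both extract the diagonal $\beta_{i,i+k-1}$ from the identity $H_{R/I_{\ge k}}(t)(1-t)^n=\sum_{i,j}(-1)^i\beta_{i,j}t^j$ once the off-diagonal entries are known.

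The one genuine difference is in part two. The paper argues directly at the level of the Koszul complex: since $(R/I_{\ge k})_d=(R/I)_d$ for all $d\ge k$, the graded strands of the two Koszul complexes needed to compute $H_i$ in internal degree $j>i+k-1$ coincide, so the homologies agree. You instead package the same degree observation as the statement that $N=I/I_{\ge k}$ is concentrated in degrees $<k$, deduce $\tor_i^R(N,\K)_j=0$ for $j\ge i+k$, and read off the isomorphism from the long exact $\tor$-sequence. Both routes rest on the identical fact that the two rings differ only below degree $k$; your version is a clean functorial repackaging that avoids checking by hand that the boundary images in the Koszul complex match at the borderline degree $j-i-1=k-1$, at the cost of invoking the long exact sequence. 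Either way the argument is short and the content is the same.
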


\begin{proof}
	We think about the Betti numbers as the dimension of the homology of the Koszul complex of $R/I$. A ``monomial" $mT_{l_1}\wedge\cdots\wedge T_{l_i}$
	is of degree $(i,i+j)$ if $m\in R/I$ is of degree $j$, and elements of degree $(i,i+j)$ are linear combinations of such monomials. Now $\beta_{1,j}(R/I_{\ge k})=0$
	if $j<k$ since the generators of $I_{\ge k}$ have degree $\ge k$, and then $\beta_{i,j}(R/I_{\ge k})=0$ if $j<i+k-1$, since if $j_i$ is the smallest $j$
	for which $\beta_{i,j}(R/I_{\ge k})\ne0$, then $j_i<j_{i+1}$ because we have a minimal resolution. In degrees $j\ge i-k-1$ $R/I_{\ge k}$ is identical 
	with $R/I$, so if $j>i+k-1$
	$\beta_{i,j}(R/I_{\ge k})=\beta_{i,j}(R/I)$. Since the Hilbert series of $R/I_{\ge k}$ equals $\sum_{i=0}^n(-1)^i\beta_{i,i+j}(R/I_{\ge k})t^{i+j}/(1-t)^n$, we can determine 
	the remaining Betti numbers $\beta_{i,i+k-1}(R/I_{\ge k})$ from the Hilbert series.
\end{proof}

We first give some corollaries.

\begin{corollary}
	We have that $k$-index of $R/I_{\ge k}$ is smaller or equal to the $(k+1)$-index. In particular, if $R/I_{\geq k}$ has a linear resolution, 
	then $R/I_{\geq k+1}$ has a linear resolution.
\end{corollary}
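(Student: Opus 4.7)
The plan is to translate the $k$-index condition into a numerical condition on the Betti numbers of $R/I$ using Theorem \ref{betti I_geq k}, and then compare these conditions for $k$ versus $k+1$.

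First I would unpack the definition: the $k$-index of $R/I_{\geq k}$ is at least $p$ precisely when $\beta_{i,j}(R/I_{\geq k})=0$ for every $1\le i\le p$ and every $j\ne i+k-1$. The vanishing for $j<i+k-1$ comes for free from Theorem \ref{betti I_geq k}, while for $j>i+k-1$ that theorem identifies $\beta_{i,j}(R/I_{\geq k})$ with $\beta_{i,j}(R/I)$. So the condition collapses to
\[
\beta_{i,j}(R/I)=0 \quad\text{for all } 1\le i\le p \text{ and } j\ge i+k.
\]

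Next I would apply the same reasoning to $R/I_{\geq k+1}$, obtaining the analogous characterization: the $(k+1)$-index of $R/I_{\geq k+1}$ is at least $p$ iff $\beta_{i,j}(R/I)=0$ for all $1\le i\le p$ and $j\ge i+k+1$. This is strictly weaker than the previous condition (it drops the vanishing at $j=i+k$), so the first implies the second. Hence the $k$-index is bounded above by the $(k+1)$-index, which is the main claim. The linear-resolution addendum is the same comparison at $p=\infty$: a $k$-linear resolution of $R/I_{\ge k}$ yields $\beta_{i,j}(R/I)=0$ for all $i\ge 1$ and $j\ge i+k$, hence a fortiori $\beta_{i,j}(R/I)=0$ for $j\ge i+k+1$, so $R/I_{\ge k+1}$ has a $(k+1)$-linear resolution.

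I do not expect a substantive obstacle: the real work is packaged in Theorem \ref{betti I_geq k}, and what remains is an elementary comparison of vanishing ranges. The only delicate point is the bookkeeping correspondence between the $N_{k,p}$ definition and the indexing convention $j>i+k-1$ used in Theorem \ref{betti I_geq k}, together with verifying that Theorem \ref{betti I_geq k} indeed applies with $k$ replaced by $k+1$; both are immediate once definitions are unpacked.
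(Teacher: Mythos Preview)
Your proposal is correct and matches the paper's intent: the corollary is stated without proof immediately after Theorem~\ref{betti I_geq k}, and your argument---reducing the $N_{k,p}$ condition to vanishing of $\beta_{i,j}(R/I)$ for $j\ge i+k$ and then comparing with the weaker range $j\ge i+k+1$---is exactly the direct deduction the paper leaves implicit. The only caveat is that Theorem~\ref{betti I_geq k} as literally stated fixes $k=\min\{\deg(u)\mid u\in I\}$, but its proof (and the subsequent corollaries) make clear it is meant for arbitrary $k$, so your application with $k+1$ is justified.
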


\begin{corollary}
	Let $I$ be a graded ideal of $R$ and $\reg(I)=d$. Then for any positive integer $k$ we have
	$$\reg(I_{\geq k})=\begin{cases}
		d\quad\mbox{ if }d\geq k,\\
		k\quad\mbox{ otherwise}.
	\end{cases}$$
\end{corollary}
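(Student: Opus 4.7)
The plan is to read the regularity directly off Theorem~\ref{betti I_geq k}. Translating to the quotient, $\reg(I_{\geq k}) = \reg(R/I_{\geq k}) + 1$, and $\reg(I) = d$ corresponds to $\reg(R/I) = d-1$ with some $\beta_{i, i+d-1}(R/I)$ nonzero. The theorem kills the Betti numbers of $R/I_{\geq k}$ in the strip $j - i < k - 1$ and identifies those in the strip $j - i > k - 1$ with the Betti numbers of $R/I$. So $\reg(R/I_{\geq k})$ is determined by comparing the largest surviving slope from this transfer with whatever happens on the exceptional diagonal $j - i = k - 1$.

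When $d > k$, take $s = d - 1 > k - 1$. The transfer yields $\beta_{i, i+s}(R/I_{\geq k}) = \beta_{i, i+s}(R/I) \neq 0$ for some $i$, while any strictly larger slope $s' > d - 1$ gives $\beta_{i, i+s'}(R/I_{\geq k}) = \beta_{i, i+s'}(R/I) = 0$. Hence $\reg(R/I_{\geq k}) = d - 1$ and $\reg(I_{\geq k}) = d$.

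When $d \leq k$, every transferred slope satisfies $s > k - 1 \geq d - 1 = \reg(R/I)$, so all of them vanish; this already gives the upper bound $\reg(R/I_{\geq k}) \leq k - 1$. For the reverse inequality I would produce a nonzero Betti number on the diagonal $j - i = k - 1$. Since every minimal generator of $I$ has degree at most $\reg(I) = d \leq k$, multiplying these generators by monomials of complementary degree exhibits $I_{\geq k}$ as generated in degree exactly $k$, forcing $\beta_{1, k}(R/I_{\geq k}) \neq 0$. Combining gives $\reg(R/I_{\geq k}) = k - 1$, i.e.\ $\reg(I_{\geq k}) = k$. The only point needing care is the boundary case $d = k$: there the top Betti number must be located on the exceptional middle diagonal rather than inherited from $R/I$, and this is precisely where the existence of a degree-$k$ minimal generator of $I_{\geq k}$ does the work.
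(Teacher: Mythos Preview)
Your argument is correct and follows the same route as the paper's proof, namely reading the regularity directly off Theorem~\ref{betti I_geq k}. You are in fact more careful than the paper: you single out the boundary case $d=k$ and supply the lower-bound witness $\beta_{1,k}(R/I_{\geq k})\neq 0$ (via the observation that all minimal generators of $I$ have degree $\le d\le k$), a step the paper's two-line proof leaves implicit.
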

\begin{proof}
	We have $\beta_{i,i+d}(I)\neq0$ for some $i$. Theorem \ref{betti I_geq k} implies that $\reg(I_{\geq k})=d$ whenever $d\geq k$ and $\reg(I_{\geq k})=k$ otherwise.
\end{proof}

For a complete intersection generated in degree 2, the result is easy to describe.

\begin{corollary}\label{ind}
	If $I$ is an artinian complete intersection generated in degree 2, then the $k$-index of $R/I_{\geq k}$ is $k-1$ if $2\le k\le n$ and the $k$-index is $\infty$ if $k\ge n+1$.
\end{corollary}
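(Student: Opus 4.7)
The plan is to reduce everything to the shape of the Koszul resolution of the complete intersection, then read off the $k$-index from Theorem \ref{betti I_geq k}. Since $I$ is an artinian complete intersection of $n$ quadrics in $R$, its minimal free resolution is the Koszul complex on a regular sequence of $n$ quadratic forms, so $\beta_{i,2i}(R/I)=\binom{n}{i}$ for $0\le i\le n$ and every other graded Betti number of $R/I$ vanishes.

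Feeding this into Theorem \ref{betti I_geq k} with truncation parameter $k$, I would record two things. First, $\beta_{i,j}(R/I_{\ge k})=0$ whenever $j<i+k-1$, so nothing below the would-be linear strand contributes. Second, $\beta_{i,j}(R/I_{\ge k})=\beta_{i,j}(R/I)$ whenever $j>i+k-1$, so the only possibly nonzero Betti numbers of $R/I_{\ge k}$ strictly above the linear strand $j=i+k-1$ are the entries $\beta_{i,2i}(R/I_{\ge k})=\binom{n}{i}$ with $2i>i+k-1$, i.e.\ with $i\ge k$. In particular, for $1\le i\le k-1$ no Betti number of $R/I_{\ge k}$ lies off the linear strand at all.

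It then suffices to split into two cases. If $k\ge n+1$ then $\binom{n}{i}=0$ for every $i\ge k$, so the whole resolution of $R/I_{\ge k}$ is $k$-linear and the $k$-index is $\infty$. If $2\le k\le n$ then $\beta_{k,2k}(R/I_{\ge k})=\binom{n}{k}\ne 0$ sits strictly above the linear strand at homological index $k$, which breaks $N_{k,k}$; but the previous paragraph shows that $N_{k,k-1}$ does hold, so the $k$-index is exactly $k-1$. The only subtlety to keep in mind is that $N_{k,p}$ is defined purely by the vanishing of Betti numbers off the linear strand in the first $p$ homological degrees, so individual linear-strand Betti numbers $\beta_{i,i+k-1}$ are allowed to vanish without ruining $N_{k,p}$; once that convention is acknowledged there is no substantive obstacle, the argument being entirely a bookkeeping consequence of the Koszul computation and Theorem \ref{betti I_geq k}.
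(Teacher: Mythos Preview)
Your argument is correct and is precisely the intended derivation from Theorem~\ref{betti I_geq k}; the paper states this corollary without proof, and your reasoning supplies the natural details. The key points---that the Koszul resolution gives $\beta_{i,2i}(R/I)=\binom{n}{i}$ as the only nonzero Betti numbers, that Theorem~\ref{betti I_geq k} forces everything off the linear strand at homological index $i$ to agree with $\beta_{i,2i}(R/I)$ precisely when $i\ge k$, and that this entry is nonzero exactly for $k\le i\le n$---are all handled correctly, including the boundary case $i=k-1$ where $2i$ lands on the linear strand.
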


\begin{corollary}
	If $S=k[x_1,\ldots,x_n]/((f_1,\ldots,f_n)\cap M^k)$, $f_1,\ldots,f_n$ a complete intersection in degree $d$, then $S$ has a linear resolution exactly for $k\ge nd-n+1$.
\end{corollary}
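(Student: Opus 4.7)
The plan is to reduce the linear-resolution question for $R/I_{\ge k}$ (where $I=(f_1,\ldots,f_n)$) to a regularity computation, by combining three ingredients already at our disposal: the explicit Koszul resolution of a complete intersection, the regularity corollary for $I_{\ge k}$ proved just above, and Theorem~\ref{linear reg}, which for a graded algebra $R/J$ with all minimal generators of $J$ in degree $\ge k$ equates ``$R/J$ has a $k$-linear resolution'' with the equality $\reg(J)=k$.

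First I would compute $\reg(I)$ directly from the Koszul complex. Since $f_1,\ldots,f_n$ is a regular sequence of forms of degree $d$, the Koszul complex on these elements is a minimal graded free resolution of $R/I$ with $\beta_{i,id}(R/I)=\binom{n}{i}$ and no other nonzero graded Betti numbers. Hence $\reg(R/I)=\max_i(id-i)=n(d-1)$ and therefore $\reg(I)=nd-n+1$. Plugging this value into the regularity corollary for $I_{\ge k}$ just established yields
\[\reg(I_{\ge k}) = \begin{cases} nd-n+1 & \text{if } k\le nd-n+1,\\ k & \text{if } k>nd-n+1. \end{cases}\]

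Finally I would apply Theorem~\ref{linear reg}. For $k>d$, the minimal generators of $I_{\ge k}$ lie in degrees $\ge k$ with at least one in degree exactly $k$, so any linear resolution must be $k$-linear; by Theorem~\ref{linear reg} this occurs precisely when $\reg(I_{\ge k})=k$, i.e.\ when $k\ge nd-n+1$. For $k\le d$ one has $I_{\ge k}=I$, whose minimal generators all sit in degree $d$, so a linear resolution would have to be $d$-linear and would force $\reg(I)=d$; but $\reg(I)=nd-n+1>d$ as soon as $n,d\ge 2$, ruling this out and matching the claimed threshold since then $k\le d<nd-n+1$. I do not anticipate any genuine obstacle; the only thing to watch is keeping track of whether ``linear'' means $k$-linear or $d$-linear in the two ranges, and verifying that the threshold $nd-n+1$ correctly separates them.
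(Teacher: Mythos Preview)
Your argument is correct and is precisely the intended one: the paper states this as a corollary without proof, and the natural derivation is exactly what you wrote---compute $\reg(I)=n(d-1)+1$ from the Koszul resolution, feed this into the regularity corollary for $I_{\ge k}$, and invoke Theorem~\ref{linear reg}. Your care in separating the cases $k\le d$ (where $I_{\ge k}=I$) and $k>d$ (where $I_{\ge k}$ has a minimal generator in degree $k$), and in noting the degenerate cases $n=1$ or $d=1$, is appropriate.
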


\begin{corollary}
	If $S=k[x_1,\ldots,x_n]/(f_1,\ldots,f_n)$, where $f_1,\ldots,f_n$ is a complete intersection in degree $d$, and $T=k[x_1,\ldots,x_n]/((f_1,\ldots,f_n)\cap M^k)$.
	Suppose that $k\le n(d-1)$. Then $\beta_{i,j}(T)\ne0$ only if $(i,j)=(0,0)$, $(i,j)=(i,k+i-1)$ for $1\le i\le n$, and for $(i,j)=(r,dr)$ for $\frac{k-1}{d-1}<r\le n$. Furthermore,
	$\beta_{r,dr}={n\choose r}$ for those $r$. 
\end{corollary}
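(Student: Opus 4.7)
The plan is to reduce the claim to Theorem~\ref{betti I_geq k} applied to $I=(f_1,\ldots,f_n)$. The essential input is that the Koszul complex on a regular sequence in degree $d$ is the minimal graded free resolution of $R/I$, so
\[
\beta_{r,dr}(R/I) = \binom{n}{r},\qquad 0\leq r\leq n,
\]
and all other $\beta_{i,j}(R/I)$ vanish. If $k\leq d$ then $I\subseteq M^d\subseteq M^k$, so $I_{\geq k}=I$ and $T=R/I$; in this range $(k-1)/(d-1)\leq 1$, and one checks directly that each nonzero Koszul position $(r,dr)$ with $r\geq 1$ already lies in one of the two sets listed in the conclusion.

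For $d<k\leq n(d-1)$ I would apply Theorem~\ref{betti I_geq k} and split the $(i,j)$-plane along the diagonal $j=i+k-1$. Above the diagonal ($j>i+k-1$), the theorem gives $\beta_{i,j}(T)=\beta_{i,j}(R/I)$, which is nonzero exactly at Koszul positions $(r,dr)$ satisfying $dr>r+k-1$, equivalently $r>(k-1)/(d-1)$; this simultaneously produces the claimed location of the ``unchanged'' Betti numbers and the value $\binom{n}{r}$, which is the second assertion. Below the diagonal ($j<i+k-1$), the same theorem forces $\beta_{i,j}(T)=0$. The only remaining candidates are the slots on the diagonal $j=i+k-1$ itself, and since $\pdim(T)\leq n$ (because $R$ has global dimension $n$) these are confined to $1\leq i\leq n$. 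Combined with $\beta_{0,0}(T)=1$, this exhausts the possible nonzero positions and establishes the first claim.

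The one delicate point is the boundary case in which $(k-1)/(d-1)$ is an integer $r_0$: the Koszul position $(r_0,dr_0)$ then lies exactly on the line $j=i+k-1$, so Theorem~\ref{betti I_geq k} does not by itself compute $\beta_{r_0,dr_0}(T)$. The strict inequality $r>(k-1)/(d-1)$ in the statement is precisely what accommodates this, assigning the boundary slot to the linear strand rather than to the ``unchanged'' part. Apart from this bookkeeping, the argument is a straightforward direct appeal to Theorem~\ref{betti I_geq k} together with the explicit Koszul resolution of the complete intersection, and I do not anticipate any further obstacles.
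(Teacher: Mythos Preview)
Your argument is correct and is exactly the intended one: the paper states this corollary without proof, leaving it as an immediate consequence of Theorem~\ref{betti I_geq k} together with the Koszul resolution of the complete intersection, which is precisely what you carry out. Your handling of the boundary case $r_0=(k-1)/(d-1)$ and of the range $k\le d$ is also accurate.
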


\begin{example}
	Let $S=k[x_1,\ldots,x_8]/\left((x_1^2,\ldots,x_8^2)\cap M^5\right)$. In $S$ we have all monomials
	of degree $<5$, but only the squarefree in degrees 6,7, and 8, and nothing in degrees $>8$.
	Thus the Hilbert series is 
	$$1+{8\choose1}t+{9\choose2}t^2+{10\choose3}t^3+{11\choose4}t^4+{8\choose3}t^5+{8\choose2}t^6+{8\choose1}t^7+{8\choose0}t^8=$$
	$$\frac{(1-t)^8(1+{8\choose1}t+{9\choose2}t^2+{10\choose3}t^3+{11\choose4}t^4+{8\choose3}t^5+{8\choose2}t^6+{8\choose1}t^7+{8\choose0}t^8)}{(1-t)^8}=$$
	$$\frac{1-736t^5+4200t^6-10528t^7+14910t^8-12832t^9+6720t^{10}-2016t^{11}+288t^{12}-8t^{14}+t^{16}}{(1-t)^8}.$$
	We have $\beta_{5,10}=56,\beta_{6,12}=28,\beta_{7,14}=8,\beta_{8,16}=1$. The Hilbert series gives $\beta_{0,0}=1,\beta_{1,5}=736,\beta_{2,6}=4200,\beta_{3,7}=10528,\beta_{4,8}=14910,\beta_{5,9}=12832,\beta_{6,10}=6720+56=6776,\beta_{7,11}=2016,
	\beta_{8,12}=288-28=260$.
\end{example}

\bibliographystyle{plain}
\bibliography{Truncation}

\begin{thebibliography}{1}

\bibitem{EisenbudHunekeUlrich2006}
C.~Huneke D.~Eisenbud and B.~Ulrich.
\newblock The regularity of tor and graded betti numbers.
\newblock {\em American Journal of Mathematics}, \textbf{128}(3):573--605,
  2006.

\bibitem{EisenbudGoto1984}
D.~Eisenbud and S.~Goto.
\newblock Linear free resolutions and minimal multiplicity.
\newblock {\em Journal of Algebra}, \textbf{88}(1):89--133, 1984.

\bibitem{Froberg1982}
R.~Fr{\"o}berg.
\newblock A study of graded extremal rings and of monomial rings.
\newblock {\em Mathematica Scandinavica}, \textbf{51}(1):22--34, 1982.

\bibitem{Froberg2021}
R.~Fr{\"o}berg.
\newblock Stanley-reisner rings.
\newblock In I.~Peeva, editor, {\em Commutative algebra: expository papers
  dedicated to David Eisenbud on the occasion of his 75th birthday}, pages
  317--341. Springer, 2021.

\bibitem{GreenLazarsfeld1984}
M.~Green and R.~Lazarsfeld.
\newblock The nonvanishing of certain {K}oszul cohomology groups.
\newblock {\em Journal of Differential Geomeometry}, \textbf{19}:168--170,
  1984.

\bibitem{GreenLazarsfeld1985}
M.~Green and R.~Lazarsfeld.
\newblock On the projective normality of complete linear series on an algebraic
  curve.
\newblock {\em Inventiones Mathematicae}, \textbf{83}:73--90, 1985.

\bibitem{HerzogHibi1999}
J.~Herzog and T.~Hibi.
\newblock Componentwise linear ideals.
\newblock {\em Nagoya Mathematical Journal}, \textbf{153}:141--153, 1999.

\bibitem{Hochster1977}
M.~Hochster.
\newblock Cohen-macaulay rings, combinatorics, and simplicial complexes.
\newblock In B.~R. McDonald and R.~Morris, editors, {\em Ring theory II:
  Proceedings of the second Oklahoma conference (Lecture notes in pure and
  applied mathematics; \textbf{26})}, pages 171--223. Dekker, 1977.

\end{thebibliography}

\end{document}